\newtheorem{thm}{Theorem}
\newtheorem{prop}{Proposition}
\def\PP{\mathcal P}
\def\QQ{\mathcal Q}
\def\R{\mathbb R}
\def\F{\mathbb F}
\def\xx{\bm{x}}
\def\yy{\bm{y}}
\def\zz{\bm{z}}
\def\vv{\bm{v}}
\def\burn{{\mathop{\mathrm{burn}}}}
\def\dist{{\mathop{\mathrm{dist}}}}
\definecolor{orcid_color}{HTML}{A6CE39}
\DeclareRobustCommand{\orcidicon}{%
	\raisebox{.2mm}{\scalerel*{%
	\begin{tikzpicture}[xscale=1,yscale=-1,transform shape]
	\filldraw[color=orcid_color] svg {M256,128c0,70.7-57.3,128-128,128C57.3,256,0,198.7,0,128C0,57.3,57.3,0,128,0C198.7,0,256,57.3,256,128z};
	\filldraw[color=white] svg {M86.3,186.2H70.9V79.1h15.4v48.4V186.2z} svg {M108.9,79.1h41.6c39.6,0,57,28.3,57,53.6c0,27.5-21.5,53.6-56.8,53.6h-41.8V79.1z M124.3,172.4h24.5
		c34.9,0,42.9-26.5,42.9-39.7c0-21.5-13.7-39.7-43.7-39.7h-23.7V172.4z} svg {M88.7,56.8c0,5.5-4.5,10.1-10.1,10.1c-5.6,0-10.1-4.6-10.1-10.1c0-5.6,4.5-10.1,10.1-10.1
		C84.2,46.7,88.7,51.3,88.7,56.8z};
	\end{tikzpicture}}{|}}%
}
\newcommand{\orcid}[1]{\href{https://orcid.org/#1}{\orcidicon}}
\newcommand{\arxiv}[1]{arXiv:\href{https://doi.org/10.48550/arXiv.#1}{#1}}
\begin{document}

\title[Burning numbers via eigenpolytopes]{Burning numbers via eigenpolytopes
\\--- Hamming graphs, Johnson graphs, and halved cubes}

\author[Hajime Tanaka]{Hajime Tanaka\,\orcid{0000-0002-5958-0375}}
\address{\href{https://www.math.is.tohoku.ac.jp/index.html}{Research Center for Pure and Applied Mathematics}, Graduate School of Information Sciences, Tohoku University, Sendai 980-8579, Japan}
\email{htanaka@tohoku.ac.jp}
\urladdr{https://hajimetanaka.org/}

\author[Norihide Tokushige]{Norihide Tokushige\,\orcid{0000-0002-9487-7545}}
\address{College of Education, University of the Ryukyus, Nishihara  903-0213, Japan}
\email{hide@cs.u-ryukyu.ac.jp}
\urladdr{http://www.cc.u-ryukyu.ac.jp/~hide/}

\begin{abstract}
We give lower and upper bounds on the burning number of Hamming graphs, Johnson
graphs, and halved cube graphs. For the lower bounds, we use the fact that $1$-skeletons of the
eigenpolytopes of these graphs are isomorphic to the original graphs. Then, we present
a dynamic search algorithm performed on the eigenpolytope to find an unburned 
vertex. This idea was originally used by Alon (Discrete Appl.\ Math.,\ 1992),
who determined the burning number of the hypercube graphs.
\end{abstract}
\subjclass[2020]{Primary 05C85; Secondary 05C50; 90C10}
\keywords{Burning number; eigenpolytope; Hamming graph; Johnson graph}

\maketitle

\section{Introduction}
We estimate or determine the burning number of Hamming graphs,
Johnson graphs, and halved cube graphs. These graphs have the remarkable
property that the $1$-skeletons of their eigenpolytopes are isomorphic to the
original graphs. Our proof uses the geometric structure of these eigenpolytopes.

We recall the burning number of a graph.
Let $X$ be a connected graph with vertex set $V$. 
The distance between two vertices $u,v\in V$, denoted by $\dist(u,v)$, is defined as the number of edges in a shortest path between them.
For a nonnegative integer $i$, we define the $i$-neighbor of a vertex $u$ by
$\Gamma_i(u):=\{v\in V:\dist(u,v)\leq i\}$, e.g., $\Gamma_0(u)=\{u\}$.
A sequence of $b$ vertices $v_1,v_2,\ldots,v_b$ satisfying
$\bigcup_{1\leq i\leq b}\Gamma_{b-i}(v_i)=V$ is called a \emph{burning sequence}
of length $b$. 
Suppose that we can choose one vertex at each discrete time step, and
if vertex $v_i$ is burned at time $i$, all vertices of $\Gamma_{b-i}(v_i)$ are burned by time $b$. Then, a burning sequence
of length $b$ tells us how to burn all the vertices of $X$ by time $b$.
The \emph{burning number} of $X$, denoted by $\burn(X)$, is the minimum number $b$ such that there exists a burning sequence of length $b$.
If $X$ has diameter $d$, then $\Gamma_d(v)=V$ for all $v\in V$. This yields
a trivial bound $\burn(X)\leq d+1$. See \cite{Bonato} for more on burning numbers.

Although computing the burning number is NP-complete (it is so even
when restricted to trees of maximum degree three; see \cite[Theorem 6]{Betal}), 
Alon~\cite{Alon} determined the burning number of the $n$-dimensional hypercube $Q_n$:
\begin{thm}[Alon]
$\burn(Q_n)=\lceil\frac n2\rceil+1$.
\end{thm}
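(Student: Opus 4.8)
The plan is to establish the upper and lower bounds separately. The upper bound is the easy half: take $b=\lceil n/2\rceil+1$ and let $v_1,v_2$ be any pair of antipodal vertices, say $v_1=(0,\dots,0)$ and $v_2=(1,\dots,1)$, with $v_3,\dots,v_b$ arbitrary. Writing $|x|$ for the Hamming weight of a vertex $x\in\{0,1\}^n$, one has $\Gamma_{b-1}(v_1)=\{x:|x|\le\lceil n/2\rceil\}$ and $\Gamma_{b-2}(v_2)=\{x:|x|\ge\lfloor n/2\rfloor+1\}$, and since $\lceil n/2\rceil\ge\lfloor n/2\rfloor$ these two balls already cover $V$. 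Hence $v_1,\dots,v_b$ is a burning sequence and $\burn(Q_n)\le\lceil n/2\rceil+1$.

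For the lower bound we must show that no burning sequence of length $b=\lceil n/2\rceil$ can exist; equivalently, for any $v_1,\dots,v_b\in Q_n$ we must exhibit an \emph{unburned} vertex $x$, i.e.\ one with $\dist(x,v_i)\ge b-i+1$ for every $i$. The required distance $b-i+1$ is largest for $i=1$ and decreases to $1$ for $i=b$, so $x$ must be far from $v_1$, a little less far from $v_2$, and so on. We pass to the eigenpolytope of $Q_n$ for its second-largest eigenvalue $n-2$: this eigenspace affords the embedding $\phi\colon x\mapsto((-1)^{x_1},\dots,(-1)^{x_n})$, whose image is the vertex set $\{\pm1\}^n$ of the cube $[-1,1]^n$, and the $1$-skeleton of this cube is isomorphic to $Q_n$. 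Since $\langle\phi(x),\phi(v)\rangle=n-2\dist(x,v)$, the problem becomes: find a vertex $y=\phi(x)$ of the cube with $\langle y,\phi(v_i)\rangle\le n-2(b-i+1)$ for all $i$.

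The strategy is Alon's dynamic search, run on the eigenpolytope. We process the constraints in the order $i=1,2,\dots,b$ (hardest first, since the ball radii $b-i$ are decreasing), maintaining a current cube-vertex $y$ that satisfies every constraint seen so far. We start from $y=-\phi(v_1)$, which meets the $i=1$ constraint with room to spare. When constraint $i$ is revealed, if $y$ already satisfies it we leave it; otherwise we move $y$ along edges of the eigenpolytope — that is, flip coordinates of the underlying $x\in Q_n$ — until $\langle y,\phi(v_i)\rangle$ reaches the required threshold. The subtlety is that such a step can simultaneously decrease $\dist(x,v_{i'})$ for some already-handled $i'<i$, which has a \emph{larger} ball, and so threaten to re-burn $x$. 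The crux is therefore to choose, at each move, an edge that makes progress against the current constraint while doing the least damage to the earlier ones; maintaining an appropriate weighting of the $n$ coordinates (equivalently, a potential function on the cube) one shows that for $b\le\lceil n/2\rceil$ the accumulated damage never overtakes the progress, so the walk terminates at a $y$ obeying all $b$ constraints. Then $x=\phi^{-1}(y)$ is unburned, giving $\burn(Q_n)\ge\lceil n/2\rceil+1$.

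I expect the main obstacle to be precisely this bookkeeping — proving that the small evasive moves forced by the later balls cannot pile up to re-cover $x$ by one of the early, large balls. The margin is genuinely tight (two antipodal balls of radii $\lceil n/2\rceil-1$ and $\lceil n/2\rceil-2$ just fail to cover $Q_n$, which is why the answer is $\lceil n/2\rceil+1$ and not smaller), so the invariant that governs the walk must be chosen with care; this is the one place where the geometry of the eigenpolytope, rather than a soft counting argument, does the real work, and it is exactly this mechanism that the later sections adapt to Hamming, Johnson, and halved-cube graphs.
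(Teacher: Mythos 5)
Your upper bound argument is correct and is exactly the one the paper sketches: two antipodal vertices burned at times $1$ and $2$ already cover $Q_n$ by time $\lceil n/2\rceil+1$. The lower bound, however, is not a proof but an outline, and the gap is precisely where you flag it: you assert that ``maintaining an appropriate weighting of the $n$ coordinates \dots\ one shows that for $b\le\lceil n/2\rceil$ the accumulated damage never overtakes the progress,'' but this statement \emph{is} the lower bound --- you neither specify the weighting nor state, let alone verify, the invariant that is supposed to govern the walk. Describing the shape of a dynamic search (flip coordinates to chase the current constraint while protecting the earlier ones) does not substitute for the Beck--Spencer lemma that Alon actually invokes, which encodes exactly this bookkeeping; without that lemma or a worked-out surrogate the lower bound is unestablished.

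For context, the paper itself does not reprove the lower bound of Theorem~1 --- it cites Alon. The paper's own unified machinery (Theorem~4 in Section~4) runs in the opposite direction from your sketch: it begins at the origin, in the intersection of the hyperplanes $\vv_i\cdot\zz=0$ for all $i\le m-1$, and descends through a chain of faces $F_{m-1}\supset F_{m-2}\supset\cdots\supset F_1$ of the eigenpolytope, dropping one constraint per step and finally extracting $\xx$ from the edge $F_1$; the bound on $\alpha_i$ then comes from the diameter of the $i$-face $F_i$. For $H(n,2)$ that method yields only $\burn\ge\lfloor n/2\rfloor+1$, which is one short of Alon's $\lceil n/2\rceil+1$ when $n$ is odd. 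So a correct completion of your sketch would in fact be strictly sharper than what Section~4 gives on the hypercube --- a sign that the missing invariant is not routine, and that you genuinely need the finer Alon/Beck--Spencer argument to close the gap.
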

\noindent
It is easy to show the upper bound. Indeed, take any vertex $v_1$, and let
$v_2$ be its antipodal vertex, then $\Gamma_{\lceil\frac n2\rceil}(v_1)\cup\Gamma_{\lceil\frac n2\rceil-1}(v_2)$ covers
all the $2^n$ vertices. It is, however, difficult to show the lower bound.
For this, for any given sequence of $\lceil\frac n2\rceil$ vertices, we need to show that it
cannot be a burning sequence, that is, we need to find a vertex that remains unburned at time $\lceil\frac n2\rceil$. 
To find such a vertex, Alon used a lemma due to Beck and Spencer~\cite{Beck-Spencer}, 
which was based on a dynamic search algorithm on the $n$-dimensional cube in $\R^n$. We will extend Alon's idea to graphs that can be realized as polytopes with some good properties, namely, \emph{eigenpolytopes}.

Now, we define an eigenpolytope for a graph $X$ with vertex set $V$.
Let $\theta$ be an eigenvalue of the adjacency matrix with multiplicity $m$.
Let $U_\theta$ be an $|V|\times m$ matrix whose columns consist of an orthonormal basis of the $\theta$-eigenspace. Write $u(\theta)$
for the row of $U_\theta$ that corresponds to $u\in V$. This defines an
embedding $\phi:V\to\R^m$ by $\phi(u)=u(\theta)$.
Then, we define the eigenpolytope of $X$ belonging to $\theta$, denoted by
$\PP_\theta$, by the convex hull of the set $\{u(\theta):u\in V\}$.
This definition depends on the choice of the orthonormal basis, but the
inner products $u(\theta)\cdot v(\theta)$ do not.
We identify polytopes modulo orthogonal transformations, and so the eigenpolytopes are well-defined. In this paper, we consider distance-regular graphs (see, e.g., \cite{Brouwer,vanDam}), and
we always assume that $\theta$ is the second largest eigenvalue.
In this case, Godsil~\cite[Theorem 4.3]{Godsil} established the following fundamental result.
\begin{thm}[Godsil]
Let $X$ be a distance-regular graph with second largest eigenvalue $\theta$.
Then, $X$ is isomorphic to the $1$-skeleton of the eigenpolytope $\PP_\theta$ 
if and only if it is one of the following:
(a) a Hamming graph $H(n,q)$,
(b) a Johnson graph $J(n,k)$,
(c) a halved $n$-cube $\frac12H(n,2)$,
(d) the Schl\"afli graph,
(e) the Gosset graph,
(f) the icosahedron,
(g) the dodecahedron,
(h) the complement of $r$ copies of $K_2$, or
(i) a cycle $C_n$.
\end{thm}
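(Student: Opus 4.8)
We sketch how the two implications could be proved. Throughout, write $\omega_0=1,\omega_1,\dots,\omega_d$ for the cosine sequence of $\theta=\theta_1$, so that $\phi(u)\cdot\phi(v)=\tfrac{m}{|V|}\,\omega_{\dist(u,v)}$; in particular all the points $\phi(u)$ lie on the sphere of radius $\sqrt{m/|V|}$ about the origin. Since $\theta$ is the \emph{second largest} eigenvalue, standard facts about distance-regular graphs give that the cosine sequence has a single sign change and that $\omega_i\neq 1$ for $i\geq 1$; hence $\phi$ is injective and its image consists of $|V|$ vertices of $\PP_\theta$. Thus the theorem asks precisely for which distance-regular $X$ one has, for all $u\neq v$, that $[\phi(u),\phi(v)]$ is an edge of $\PP_\theta$ if and only if $\dist(u,v)=1$. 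The most useful reformulation is local: $[\phi(u),\phi(v)]$ is an edge of $\PP_\theta$ through $\phi(u)$ if and only if $\phi(v)-\phi(u)$ is an extreme ray of the tangent cone $\mathrm{cone}\{\phi(w)-\phi(u):w\in V\setminus\{u\}\}$; equivalently, the hypothesis says that for every vertex $u$ the rays $\{\phi(w)-\phi(u):w\sim u\}$ are exactly the extreme rays of that cone.

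\textbf{The ``if'' direction} is a finite verification. For each graph on the list one computes the cosine sequence of the second largest eigenvalue and recognizes the Gram matrix $\bigl(\tfrac{m}{|V|}\omega_{\dist(u,v)}\bigr)$ of $\{\phi(u):u\in V\}$, up to a positive scalar, as the vertex Gram matrix of a classical polytope: the $n$-fold product of regular $(q{-}1)$-simplices for $H(n,q)$ (so the $n$-cube when $q=2$), the hypersimplex $\Delta_{k,n}$ for $J(n,k)$, the half-cube polytope (demihypercube) for $\tfrac12H(n,2)$, the regular $n$-gon for $C_n$, the cross-polytope for the complement of $r$ copies of $K_2$, and the Gosset polytopes $2_{21}$ and $3_{21}$, the regular icosahedron, and the regular dodecahedron for the four exceptional graphs. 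For the first three families this reduces to the observation that the cosine sequence is affine in $i$, so that $\|\phi(u)-\phi(v)\|^2$ is a positive multiple of $\dist(u,v)$; for $C_n$ it is a truncated cosine, giving the regular polygon; the exceptional cases are classical. In every case the $1$-skeleton of the named polytope is, by inspection, isomorphic to $X$, completing the easy direction.

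\textbf{The ``only if'' direction} is the substance, and we would attack it in three steps. \emph{Step 1 (geometry to cosine inequalities).} Inserting the linear functionals $x\mapsto x\cdot(\phi(u)+\phi(v))$, together with nearby functionals, into the edge criterion above turns ``$[\phi(u),\phi(v)]$ is an edge'' for $\dist(u,v)=j$ into the inequality $\omega_{\dist(u,w)}+\omega_{\dist(v,w)}<1+\omega_j$ for all $w\neq u,v$, and into its negation when $j\geq 2$. Using the single sign change of $(\omega_i)$, one checks that for $j=1$ this amounts to a concavity-type condition near the start of the sequence, whereas for $j\geq 2$ it forces the existence of vertices $w$ realizing certain extremal pairs of distances to $u$ and $v$. \emph{Step 2 (cosine inequalities to intersection numbers).} Distance-regularity converts the statements ``such $w$ must exist / must not'' into sharp relations among the intersection numbers $a_i,b_i,c_i$ and $\theta_1$; these pin down the first few intersection numbers, constrain the graph induced on the neighbours of a vertex and the $\mu$-graphs, and bound the diameter unless $X$ is a cycle. \emph{Step 3 (identification).} One then recognizes the surviving possibilities: the ``locally a disjoint union of cliques'' case gives the Hamming graphs, the ``locally a grid'' case the Johnson graphs, the ``locally a triangular graph'' case the halved cubes, and the finitely many remaining parameter arrays give $C_n$, the cocktail-party graphs, the icosahedron, the dodecahedron, and the Schl\"afli and Gosset graphs. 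Together with the ``if'' direction this yields the claimed equivalence.

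The main obstacle is Steps 1 and 2: converting the purely geometric property ``$[\phi(u),\phi(v)]$ is, or is not, an edge of the spherical polytope $\PP_\theta$'' into clean and sharp arithmetic on the cosine sequence and hence on the intersection numbers. This is exactly where the hypothesis that $\theta$ is the \emph{second largest} eigenvalue is indispensable: it produces the single sign change and the concavity of $(\omega_i)$ near $i=0$ that make these reductions valid, and without it the facial analysis of $\PP_\theta$ loses all combinatorial traction. A secondary difficulty is Step 3, where finishing the classification leans on known local-characterization theorems for distance-regular graphs.
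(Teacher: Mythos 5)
The paper does not prove this theorem; it quotes it as Godsil's known result (Theorem~4.3 of the cited reference) and uses it as input to the main argument. So there is no proof in the paper to compare your sketch against, and your outline has to stand on its own.

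The critical gap is the equivalence asserted in Step~1. You claim that for $u,v$ at distance $j$, the segment $[\phi(u),\phi(v)]$ is an edge of $\PP_\theta$ if and only if $\omega_{\dist(u,w)}+\omega_{\dist(v,w)}<1+\omega_j$ for all $w\neq u,v$. One direction is sound: when the inequality holds, the linear functional $x\mapsto x\cdot(\phi(u)+\phi(v))$ attains its maximum on $\PP_\theta$ exactly at $\phi(u)$ and $\phi(v)$, so the segment is an edge. But the converse is false for general configurations of points on a sphere: a segment can be an edge of the convex hull even though this particular ``midpoint'' functional fails to separate it, some other supporting hyperplane doing the job instead. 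You would need a separate argument --- presumably exploiting the fact that $\mathrm{Aut}(X)$ acts vertex- and distance-transitively on the vertices of $\PP_\theta$ --- that in this symmetric setting the midpoint functional detects exactly the edges. Without that, the reduction from geometry to cosine inequalities does not go through, and Steps~2 and~3 inherit the hole. Moreover Step~3 (``one then recognizes the surviving possibilities'') quietly relies on substantial local-characterization machinery for Hamming, Johnson, and halved-cube graphs; that is not fatal to the plan, but it is far from inspection. Finally, in the ``if'' direction, affineness of the cosine sequence gives only the $\ell_2^2$-embedding $\|\phi(u)-\phi(v)\|^2\propto\dist(u,v)$; actually identifying the convex hull and its $1$-skeleton with the product of simplices, the hypersimplex, the demihypercube, etc., still requires a face computation or an explicit citation for each family.
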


Among the graphs in the list, we are interested in (a), (b), and (c), because
the burning numbers of the other graphs are easily determined. Our main result
is the following.

\begin{thm}\label{thm:main}
For Hamming graphs, we have
\begin{equation}\label{eq:hamming}
\left\lfloor\left(1-\frac1q\right)n\right\rfloor+1\leq\burn(H(n,q))\leq\left\lfloor\left(1-\frac1q\right)n+\frac{q+1}2\right\rfloor.
\end{equation} 

For Johnson graphs, if $n\geq 2k$ then
\begin{equation}\label{eq:johnson}
\left\lceil\left(1-\frac kn\right)k\right\rceil+1\leq\burn(J(n,k))
\leq\left\lfloor\left(1-\frac1{\lceil\frac nk\rceil}\right)k+\frac12\left(\left\lceil\frac nk\right\rceil+1\right)\right\rfloor.
\end{equation}

For halved cubes, we have
\begin{align*}
\burn\big(\tfrac12H(n,2)\big)=\left\lceil \frac{n}{4}\right\rceil+1.  
\end{align*}
\end{thm}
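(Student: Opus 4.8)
The plan is to prove the upper and lower bounds separately; the upper bound is a short explicit construction, while the lower bound, which is where the eigenpolytope is used, is the substantial part. For the upper bound it is convenient to use the standard model of $\tfrac12H(n,2)$ on the vertex set of $Q_{n-1}$, in which two words of length $n-1$ are adjacent exactly when their Hamming distance is $1$ or $2$; then the graph distance between $x$ and $y$ is $\lceil h/2\rceil$, where $h$ is their Hamming distance, so $\Gamma_r(y)$ is the Hamming ball of radius $2r$ about $y$. Put $b=\lceil n/4\rceil+1$, choose $w_1$ arbitrarily, let $w_2$ be the antipode of $w_1$, and let $w_3,\dots,w_b$ be arbitrary. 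Since the Hamming distances from a word $x$ to $w_1$ and to $w_2$ sum to $n-1$, the set $\Gamma_{b-1}(w_1)\cup\Gamma_{b-2}(w_2)$ consists of all words at Hamming distance $\le2(b-1)$ from $w_1$ together with all words at Hamming distance $\ge(n-1)-2(b-2)$ from $w_1$; this is the whole vertex set precisely when $(n-1)-2(b-2)\le2(b-1)+1$, i.e.\ $4b\ge n+4$, which holds for $b=\lceil n/4\rceil+1$. Hence $w_1,\dots,w_b$ is a burning sequence, so $\burn\bigl(\tfrac12H(n,2)\bigr)\le\lceil n/4\rceil+1$.

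For the lower bound I would show that no sequence $v_1,\dots,v_b$ of length $b=\lceil n/4\rceil$ burns $\tfrac12H(n,2)$; equivalently, given such a sequence I must exhibit a vertex $u$ with $\dist(u,v_i)>b-i$ for every $i$, that is, an even-weight word $u$ of length $n$ at Hamming distance $\ge2(b-i)+2$ from $v_i$ for every $i$. Here Godsil's theorem is used: the second largest eigenvalue of $\tfrac12H(n,2)$ is $\theta=\tfrac12(n-1)(n-4)$, of multiplicity $n$, with eigenspace spanned by the coordinate functions $u\mapsto(-1)^{u_j}$, so $\phi(u)$ is proportional to the $\pm1$-vector $\widehat u=\bigl((-1)^{u_1},\dots,(-1)^{u_n}\bigr)$, and $\PP_\theta$ is the $n$-dimensional half-cube polytope, whose $1$-skeleton is $\tfrac12H(n,2)$. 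Since $\widehat u\cdot\widehat v$ equals $n$ minus twice the Hamming distance between $u$ and $v$, the task reduces to finding an even-weight $u$ with $\widehat u\cdot\widehat{v_i}\le n-4(b-i)-4$ for all $i$, and as $u$ ranges over all even-weight words the vectors $\widehat u$ are, up to a common scaling, the vertices of $\PP_\theta$, so this amounts to searching for a vertex of the eigenpolytope inside an intersection of $b$ half-spaces. I would carry it out by the dynamic search of Alon~\cite{Alon}, which rests on the Beck--Spencer lemma~\cite{Beck-Spencer}: one explores the cube $\{\pm1\}^n$, steering the partial choices so that each inner product $\widehat u\cdot\widehat{v_i}$ ends up at most $n-4(b-i)-4$, and then, if the resulting $u$ has odd weight---so that it is not a vertex of $\tfrac12H(n,2)$---one flips a further coordinate, which shifts each $\widehat u\cdot\widehat{v_i}$ by $\pm2$, chosen to keep every constraint satisfied. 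Because $\PP_\theta$ is the half-cube, this is Alon's search performed on the eigenpolytope, exactly as in the hypercube case; the Hamming and Johnson graphs are treated along the same lines.

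The step I expect to be the main obstacle is the bookkeeping needed to make the lower bound meet the upper bound: here there is no slack---in contrast to the Hamming and Johnson cases---so the Beck--Spencer estimate must close for exactly $b=\lceil n/4\rceil$ vectors with the thresholds above. The binding constraint is $\widehat u\cdot\widehat{v_1}\le n-4\lceil n/4\rceil$, whose right-hand side equals $0$, $-1$, $-2$, or $-3$ according to $n$ modulo $4$, so one has to keep track of the residue of $n$ modulo $4$, of the various floors and ceilings, and of the parity correction, whose cost of $2$ per inner product must be carried as a margin through the whole search, and then check that the hypotheses of the lemma do hold with enough room to absorb all of this. One can instead transfer a burning sequence of $\tfrac12H(n,2)$ of length $b$ to a burning sequence of $Q_{n-1}$ of length $2b-1$ and invoke Alon's theorem on $Q_{n-1}$; that recovers the bound except when $n\equiv1\pmod4$, where one unit is lost, so running the eigenpolytope argument uniformly is the cleaner route.
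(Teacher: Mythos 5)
Your upper‐bound construction for $\tfrac12H(n,2)$ is correct and is essentially the paper's (using a vertex and its antipode; you work in the $Q_{n-1}$ model, the paper works with even-weight words of length $n$, but the computation is the same). Your fallback observation that a burning sequence of length $b$ for $\tfrac12H(n,2)$ gives one of length $2b-1$ for $Q_{n-1}$, and that this recovers the lower bound from Alon's theorem except when $n\equiv1\pmod4$, is also a valid and nice remark.

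However, there are two substantial gaps. First, the proposal only seriously treats the halved cube. The sentence ``the Hamming and Johnson graphs are treated along the same lines'' hides exactly the part of the theorem where Alon's argument does \emph{not} transfer: for $q>2$ the $\theta$-eigenpolytope of $H(n,q)$ lives in $\mathbb R^{n(q-1)}$ and is not a cube, and the $\theta$-eigenpolytope of $J(n,k)$ lives in $\mathbb R^{n-1}$ and is not a half-cube, so the Beck--Spencer lemma on $\{\pm1\}^n$ is simply unavailable. What replaces it in the paper is a genuinely more general ``move to a facet'' search run inside an arbitrary eigenpolytope $\QQ_\theta$, whose correctness rests on two structural facts you would need to state and use: the cosine sequence $w_j=1-j/b$ is an arithmetic progression, and every $i$-face of $\QQ_\theta$ has $1$-skeleton of diameter at most a known bound $D_i$. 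The inequality that closes the argument, $\lceil b\rceil-b\le i-D_i$, is checked case by case using these $D_i$; none of this appears in, or follows from, what you have written. You also never address the upper bounds for $H(n,q)$ and $J(n,k)$ at all (the Johnson upper bound needs a specific burning sequence built from a partition of $[n]$ into $\lceil n/k\rceil$ blocks, and the Hamming one is a separate argument).

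Second, even for the halved cube your lower-bound sketch is not closed. Doing Alon's search on all of $\{\pm1\}^n$ and then flipping one coordinate to fix parity shifts every inner product $\widehat u\cdot\widehat{v_i}$ by $\pm2$, and these shifts are \emph{correlated} across $i$ (they all come from the same flipped coordinate), so ``chosen to keep every constraint satisfied'' is not an independent choice per $i$. You flag this yourself and say one must ``check that the hypotheses of the lemma do hold with enough room,'' but the check is not done, and as you also note the bound is tight here, so a blanket margin of $2$ per constraint is not obviously affordable. The paper sidesteps the parity issue entirely by running the search directly on the eigenpolytope, whose vertices are exactly the even-weight words; there is nothing to repair at the end.
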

\noindent
We will present a unified method that gives lower bounds on the burning number of these graphs. This method relies on the fact that all the points
of the eigenpolytope, corresponding to (a), (b), or (c), are on $d+1$ parallel hyperplanes arranged at the same interval, 
where $d$ is the diameter of the graph. 

Noting that $Q_n=H(n,2)$ and $\lfloor\frac n2+\frac32\rfloor=\lceil\frac n2\rceil+1$,
Alon's result shows that if $q=2$ then the correct value of the burning number in
\eqref{eq:hamming} coincides with the upper bound.
For $q\geq 3$, \eqref{eq:hamming} is obtained in \cite{T2025}, 
but our proof here is much simpler. The result \eqref{eq:johnson} looks complicated,
and we give two easy corollaries:
$\burn(J(2k,k))=\lceil\frac k2\rceil+1$, and if
$n>k^2$ then $\burn(J(n,k))=k+1$.
The burning number of (d), (e), (f), (g), (h), or (i) is
$3,3,3,4,2$, and $\lceil n^{1/2}\rceil$, respectively.
For (i), a shortest burning sequence $v_1,\dots,v_{\lceil n^{1/2}\rceil}$ is given by $v_j=(2\lceil n^{1/2}\rceil+1)j-j^2$, where
we assume $V(C_n)={\mathbb Z}_n$; see \cite[Corollary 11]{Bonatoetal}.

In the next section, we define distance-regular graphs appearing in
Theorem~\ref{thm:main} formally and gather their basic properties for our proof. In Section~\ref{sec:upper bounds}, we give burning sequences 
for the upper bounds of Theorem~\ref{thm:main}, where we only use graph structures. 
Finally, we prove the lower bounds of Theorem~\ref{thm:main} in 
Section~\ref{sec:lower bounds}, and this is where we use the fact that those
graphs are realized as the $1$-skeletons of their eigenpolytopes.

\section{Preliminaries}
\subsection{Cosines}
Let $X$ be a distance-regular graph with diameter $d$. 
For two vertices $u$ and $v$ of $X$ at distance $i$, we write $c_i,a_i$,
and $b_i$ for the number of neighbors of $v$ at distance $i-1,i$, and $i+1$
from $u$, respectively.
By the definition of distance-regularity, these numbers are independent of the pair $u,v$.
Let $w_i$ denote the cosine of the angle between the vectors
$u(\theta)$ and $v(\theta)$, where $\theta$ is the second largest eigenvalue.
We call $w_0,w_1,\ldots,w_d$ the \emph{cosine sequence} of $X$. 
Then, we have $w_0=1$, and
\begin{equation}\label{eq:3-term rec}
 w_{i+1}=\frac1{b_i}\left((\theta-a_i)w_i-c_iw_{i-1}\right) \qquad (i=0,1,\dots,d),
\end{equation}
where $w_{-1}$ and $w_{d+1}$ are indeterminates; see (3.2) of \cite{Godsil}. If, moreover, $X$ is one of Hamming graphs,
Johnson graphs, and halved cube graphs, then it follows from \eqref{eq:3-term rec} that the cosine sequence of $X$ is an
arithmetic progression, and 
\begin{equation}\label{eq:wi}
w_i=1-\left(1-\frac{\theta}{b_0}\right)i \qquad (i=0,1,\dots,d), 
\end{equation}
where $b_0$ is the valency (or degree) of $X$.
See also \cite[Corollary 8.4.2]{Brouwer}.

\subsection{Hamming graphs}
For integers $n\geq 1$ and $q\geq 2$,
the \emph{Hamming graph} $H(n,q)$  
has vertex set $V=\{(x_1,\ldots,x_n):x_i\in[q]\}$, where $[q]:=\{1,2,\ldots,q\}$, and
two vertices $x,y\in V$ are adjacent if and only if their Hamming distance
is one, that is, $\big|\{i:x_i\ne y_i\}\big|=1$; see \cite[Section 9.2]{Brouwer}. 
Then, $\dist(x,y)=\big|\{i:x_i\neq y_i\}\big|$, and
$H(n,q)$ is a $q^n$-vertex $n(q-1)$-regular graph of diameter $d=n$.
The second largest eigenvalue is $\theta=n(q-1)-q$ with multiplicity $m=n(q-1)$.
For $r_1,\ldots,r_n\subset [q]$ with $r_i\ne\emptyset$, let $K(r_1,\ldots,r_n)$ denote the direct product
of the complete graphs on $r_i$ for $i=1,2,\ldots,n$.
Then, $K(r_1,\ldots,r_n)$ is a convex subgraph of $H(n,q)$.
(We say that a subgraph $H$ of a graph $G$ is a \emph{convex subgraph} of $G$ if every shortest path in $G$ connecting two vertices of $H$ contains only vertices of $H$.)
Note that $H(n,q)=K([q],\ldots,[q])$.
Moreover, every convex subgraph of $H(n,q)$ is of this form, 
and there is a bijection between the set of all convex subgraphs and
the set of faces of the eigenpolytope $\PP_\theta$
(see \cite[Proposition~5.11]{Lambeck}, \cite[Theorem~7.1]{Mohri}).
For $0\leq i\leq m$,
the vertices of an $i$-face of $\PP_\theta$ induce a convex subgraph
$K(r_1,\ldots,r_n)$ with $\sum_{j=1}^n|r_j|=n+i$ of the $1$-skeleton $H(n,q)$, and this subgraph has diameter at most $i$. The cosine sequence is given by 
$w_j=1-\frac{qj}{n(q-1)}$ for $j=0,1,\ldots, n$.

\subsection{Johnson graphs}
For integers $n$ and $k$ with $1\leq k\leq n-1$,
the \emph{Johnson graph} $J(n,k)$ 
has vertex set
$V=\binom{[n]}k$, and two vertices $u,v\in V$ are adjacent if and only if $|u\cap v|=k-1$; see \cite[Section 9.1]{Brouwer}.
Then, $\dist(u,v)=|u\setminus v|=|v\setminus u|$, and
$J(n,k)$ is an $\binom nk$-vertex $k(n-k)$-regular graph of diameter $d=\min\{k,n-k\}$.
Since $J(n,k)$ is isomorphic to $J(n,n-k)$, we only consider the case $n\geq 2k$.
The second largest eigenvalue is $\theta=k(n-k)-n$ with multiplicity $m=n-1$.
For subsets $s$ and $t$ of $[n]$ such that $s\subset t$ and $|s|< k< |t|$, let
$F(s,t)$ denote the subgraph of $J(n,k)$ induced on $\big\{u\in\binom{[n]}k:s\subset u\subset t\big\}$.
Then, $F(s,t)$ is a convex subgraph of $J(n,k)$ and is isomorphic to $J(|t\setminus s|,k-|s|)$ with diameter $\min\{k-|s|,|t|-k\}$.
Moreover, every convex subgraph of $J(n,k)$ that is not a singleton is of this form, and there is a bijection between the set of all convex subgraphs and the set of faces of the eigenpolytope $\PP_\theta$ (see \cite[Proposition~5.7]{Lambeck}, \cite[p.~751]{Godsil}).
For $1\leq i\leq m$,
the vertices of an $i$-face of $\PP_\theta$ induce a convex subgraph $F(s,t)$ with $i=|t\setminus s|-1$ of the $1$-skeleton $J(n,k)$, and this subgraph has diameter at most $\lfloor \frac{i+1}{2}\rfloor$.
The cosine sequence is given by $w_j=1-\frac n{k(n-k)}j$ for 
$j=0,1,\ldots,k$.

\subsection{Halved cubes}
For an integer $n\geq 2$,
the \emph{halved $n$-cube} $\frac12 H(n,2)$ has vertex set 
$V=\{(x_1,\ldots,x_n):x_i\in\F_2,\,\sum_{i=1}^nx_i=0\}$, 
and two vertices $x,y\in V$ are adjacent if and only if their Hamming distance is two, that is, $\big|\{i:x_i\ne y_i\}\big|=2$; see \cite[Section 9.2D]{Brouwer}.
Then, $\dist(x,y)=\big|\{i:x_i\ne y_i\}\big|/2$, and
$\frac12 H(n,2)$ is a $2^{n-1}$-vertex $\binom n2$-regular graph of 
diameter $d=\lfloor n/2\rfloor$.
The second largest eigenvalue is $\theta=\frac{(n-1)(n-4)}{2}$ with multiplicity $m=n$ except when $n=2$, in which case we have $m=1$.
For $0\leq i\leq m$,
an $i$-face of $\PP_\theta$ is either an $i$-simplex $(i\geq 0)$ or $\frac12H(i,2)$ $(i\geq 4)$, 
and 
the corresponding diameter is either $1-\delta_{i0}$ or $\lfloor\frac{i}{2}\rfloor$
(see \cite[p.~752]{Godsil}).
The cosine sequence is given by $w_j=1-\frac4nj$ for $j=0,1,\ldots, d$.

\section{Upper bounds}\label{sec:upper bounds}
To show $\burn(X)\leq b$, it suffices to find a burning sequence
$v_1,\ldots,v_b\in V$, that is, $\bigcup_{1\leq i\leq b}\Gamma_{b-i}(v_i)=V$.
Sometimes, a shorter sequence will do.
For example, if $X=H(n,q)$ and $v_i=(i,i,\dots,i)$ for $i\in[q]$, then 
$\bigcup_{1\leq i\leq q}\Gamma_{b-i}(v_i)=V$ for some $b>q$.
Indeed, the following bound is shown.

\begin{prop}[{\cite[Theorem~2]{T2024}}]
$\burn(H(n,q))\leq\lfloor(1-\frac1q)n+\frac{q+1}2\rfloor$.
\end{prop}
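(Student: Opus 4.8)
\medskip
\noindent\emph{Proof proposal.}
The plan is to exhibit an explicit burning sequence whose first $q$ terms are the constant vectors, exactly as hinted in the remark preceding the statement. Put $b:=\big\lfloor(1-\tfrac1q)n+\tfrac{q+1}2\big\rfloor$. First I would peel off a degenerate range: a direct computation with the floor shows that $b\geq n+1$ holds precisely when $n\leq\binom q2$, and in that range the trivial diameter bound $\burn(H(n,q))\leq d+1=n+1\leq b$ already gives the claim. So I may henceforth assume $n\geq\binom q2+1$; this also guarantees $b\geq q$ (indeed $b\geq q\iff n\geq q/2$), which will matter below.

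Next I would write down the candidate sequence: let $v_i=(i,i,\dots,i)\in[q]^n$ for $1\leq i\leq q$, and let $v_{q+1},\dots,v_b$ be arbitrary (appending extra terms to a burning sequence is harmless, and $b\geq q$ leaves room for all $q$ constant vectors). Since the union defining a burning sequence only grows when later terms are added, it suffices to prove $\bigcup_{i=1}^q\Gamma_{b-i}(v_i)=V$. For $x\in V$ and $j\in[q]$ set $m_j(x):=\big|\{t\in[n]:x_t=j\}\big|$, so $\sum_{j=1}^qm_j(x)=n$; since distance in $H(n,q)$ is Hamming distance, $\dist(x,v_i)=n-m_i(x)$, hence $x\in\Gamma_{b-i}(v_i)$ is equivalent to $m_i(x)\geq n-b+i$.

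The core is then a one-line pigeonhole count. Suppose some $x\in V$ avoided every ball $\Gamma_{b-i}(v_i)$; then $m_i(x)\leq n-b+i-1$ for all $i\in[q]$, so
\[
n=\sum_{i=1}^qm_i(x)\leq\sum_{i=1}^q(n-b+i-1)=q(n-b-1)+\binom{q+1}2 ,
\]
which rearranges to $b\leq(1-\tfrac1q)n-1+\tfrac{q+1}2$. But $b=\big\lfloor(1-\tfrac1q)n+\tfrac{q+1}2\big\rfloor>(1-\tfrac1q)n+\tfrac{q+1}2-1$, a contradiction. Therefore no such $x$ exists, $v_1,\dots,v_b$ is a burning sequence, and $\burn(H(n,q))\leq b$.

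I do not anticipate a genuine obstacle here; the content is essentially the displayed count. The two points that need care are purely arithmetic: first, that $\big\lfloor(1-\tfrac1q)n+\tfrac{q+1}2\big\rfloor$ equals $\big\lfloor(1-\tfrac1q)n+\tfrac{q-1}2\big\rfloor+1$, which is exactly what upgrades the weak inequality $n\leq q(n-b-1)+\binom{q+1}2$ into the strict contradiction; and second, the bookkeeping around the degenerate regime $n\leq\binom q2$ (where the formula would exceed $n+1$ and the constant-vector sequence, having fewer than $q$ usable terms, cannot cover $V$), which must be checked to be subsumed by the diameter bound, while in the complementary regime $b\geq q$ so that all $q$ constant vectors really fit. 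For $q=2$ this recovers Alon's upper bound $\lceil\frac n2\rceil+1$, as noted in the introduction.
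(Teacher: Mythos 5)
Your proof is correct and takes the approach the paper itself hints at (the paper does not reprove this bound but cites \cite[Theorem~2]{T2024}): the $q$ constant vectors $v_i=(i,\dots,i)$ together with the pigeonhole count on the coordinate multiplicities $m_i(x)$, which is precisely the technique the paper does spell out for the analogous Johnson upper bound. The degenerate-range split (handling $n\leq\binom q2$ by the trivial diameter bound so that $b\geq q$ holds in the remaining range) is a correct and genuinely necessary piece of bookkeeping.
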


\begin{prop}
Let $a=\lceil\frac nk\rceil$ and $b=(1-\frac1a)k+\frac {a+1}2$.
Then,
$\burn(J(n,k))\leq b$.
\end{prop}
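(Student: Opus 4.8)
The plan is to exhibit an explicit burning sequence of length $\lfloor b\rfloor$ for $J(n,k)$ using a block decomposition of $[n]$, mimicking the idea already used for $H(n,q)$ in the preceding proposition. Write $a=\lceil n/k\rceil$, and partition $[n]$ into $a$ consecutive blocks $B_1,\dots,B_a$, each of size $k$ or $k+1$ (so $\sum_j|B_j|=n$ and each $|B_j|\le k$ is impossible only when... — more precisely, since $ak\ge n$ and $(a-1)k<n$, we may take $a$ blocks with sizes in $\{k-1,k,\dots\}$; the relevant point is that we can choose each $|B_j|\le k+1$ and at least one block of size $\ge$ something, but for the argument we only need each block to have size at most $k$ after possibly merging — I will instead split $[n]$ into $a$ blocks each of size $\le\lceil n/a\rceil\le k$). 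Using these blocks, define a family of "corner" vertices: for each $j$, let $C_j\in\binom{[n]}{k}$ be a $k$-set chosen to lie inside $B_j$ whenever $|B_j|\ge k$; in general pad $B_j$ with elements of neighbouring blocks so that each $C_j$ is a valid $k$-set. The key observation is that every $k$-set $u$ is within Johnson distance $k - |u\cap C_j|$ of $C_j$, and by a counting/pigeonhole argument, for at least one $j$ the intersection $|u\cap B_j|$ is large, hence $u$ is close to the corresponding corner.

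The main steps, in order, are: (1) set up the block partition and the corner vertices $C_1,\dots,C_a$ explicitly; (2) prove a covering lemma of the form "for every $u\in\binom{[n]}{k}$ there exists $j$ with $\dist(u,C_j)\le k - \lceil k/a\rceil$" (this is where the $\frac1a$ in the exponent comes from: an averaging argument shows $\sum_j |u\cap B_j| = k$ spread over $a$ blocks forces some block to contribute at least $k/a$); (3) interleave the $a$ corners into a burning sequence — place $C_{\sigma(1)},\dots,C_{\sigma(a)}$ at the last $a$ time steps and argue that each $C_j$, burned at time $\lfloor b\rfloor - a + (\text{its position})$, reaches radius at least $k - \lceil k/a\rceil$, which must dominate the worst-case distance from step (2); (4) verify the arithmetic that the smallest $b$ making this work is $\lfloor (1-\frac1a)k + \frac{a+1}{2}\rfloor$, where the $\frac{a+1}{2}$ term is the cost of sequencing $a$ fires (one needs the radii $r, r+1, \dots, r+a-1$ to all be at least the required covering radius, forcing $b \ge r + a - 1 + 1$ with $r$ the minimum, hence the $+\frac{a+1}{2}$ after optimizing which corner gets which time slot).

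I expect the main obstacle to be step (2): getting the covering radius exactly right. The naive averaging gives "some block $B_j$ has $|u\cap B_j|\ge k/a$", but one must be careful because the blocks have unequal sizes (some of size $\lceil n/a\rceil$, some smaller) and because $C_j$ may need to borrow elements outside $B_j$ when $|B_j|<k$, which erodes the guaranteed intersection. The cleanest route is probably to choose the $C_j$ more cleverly — e.g. let the $C_j$ overlap in a controlled way so that the union $\bigcup_j C_j=[n]$ and each element of $[n]$ lies in roughly $\lceil ak/n\rceil$ of them — and then a double-counting argument ($\sum_j |u\cap C_j| = \sum_{x\in u}|\{j: x\in C_j\}| \ge k\cdot\lceil ak/n\rceil / \text{something}$) yields $\max_j|u\cap C_j|\ge \lceil k^2/n\rceil$, i.e. $\dist(u,C_j)\le k-\lceil k^2/n\rceil$. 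One then checks $k-\lceil k^2/n\rceil \le k - \lceil k/a\rceil$ is exactly what is needed, and that combining with the $a$-step sequencing overhead reproduces the stated bound. A short direct verification that $\lfloor b\rfloor$ suffices — rather than an asymptotic estimate — will be needed to match the floor function precisely, and this final bookkeeping, while routine, is where sign/rounding errors are easiest to make.
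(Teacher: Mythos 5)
Your block-decomposition idea is in the right spirit, but two of the structural choices go wrong in ways that would sink the argument as outlined.

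First, the time placement is backwards. You propose to burn the $a$ corner vertices at the \emph{last} $a$ time steps, so $C_{\sigma(p)}$ burned at time $\lfloor b\rfloor-a+p$ gets radius $a-p\in\{0,1,\dots,a-1\}$. You then want each such radius to be at least $k-\lceil k/a\rceil$, which is false as soon as $k>a$ (e.g.\ even the largest radius $a-1$ would have to exceed $k-\lceil k/a\rceil$). The corners must be burned at the \emph{first} $a$ time steps, so $v_i$ burned at time $i$ gets radius $\lfloor b\rfloor-i$, of order $(1-\tfrac1a)k$.

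Second, and more fundamentally, the intermediate lemma you aim for — ``for every $u$ there exists $j$ with $\dist(u,C_j)\le r$'' — is the wrong shape, even if proved. With staggered radii $\lfloor b\rfloor-1>\lfloor b\rfloor-2>\cdots>\lfloor b\rfloor-a$, knowing that \emph{some} $C_j$ is close to $u$ does not help unless it is a $C_j$ whose assigned radius happens to be large enough; a single existential bound cannot be matched against $a$ different radius constraints. The paper sidesteps this entirely by an averaging argument over \emph{all} $a$ corners at once: take $v_1=[k]$, $v_i=[ki]\setminus[k(i-1)]$, and $v_a\supset[n]\setminus[k(a-1)]$, so that $\bigcup_i v_i=[n]$ and hence $\sum_{i=1}^a|x\cap v_i|\ge k$ for every $k$-set $x$. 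If $x$ were unburned at time $\lfloor b\rfloor$, then $|x\cap v_i|<k-b+i$ for each $i$, and summing gives
$k\le\sum_i|x\cap v_i|<a(k-b)+\tfrac{a(a+1)}2=k$, a contradiction. The sum $\sum_i i=\tfrac{a(a+1)}2$ is exactly what absorbs the staggering of the radii and produces the $\tfrac{a+1}2$ term cleanly, with no floor/ceiling bookkeeping and no separate covering lemma. Your double-counting remark at the end is heading in this direction, but you would need to abandon the ``$\max_j$'' conclusion and instead work directly with the sum, and you would need to flip the time placement of the fires.
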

\begin{proof}
Let $v_1=[k]$, $v_i=[ki]\setminus[k(i-1)]$ $(i=2,\dots,a-1)$, and let $v_a\in V$ be any vertex with $[n]\setminus[k(a-1)]\subset v_a$.
Since $[n]=v_1\cup v_2\cup\cdots\cup v_{a}$, it follows that
$k\leq\sum_{i=1}^{a}|u\cap v_i|$ for all $u\in V$.
We show that $\burn(J(n,k))\leq b$.
To this end, we burn $v_i$ at time $i$, and show that all vertices are
burned by time $\lfloor b\rfloor$. Suppose, to the contrary, that there is a vertex $x$
that remains unburned at time $\lfloor b\rfloor$.
Since $v_i$ is burned at time $i$, the vertices at distance within $\lfloor b\rfloor-i$ from $v_i$
are burned by time $\lfloor b\rfloor$.
By the assumption on $x$, we have 
\[
|v_i\setminus x|=\dist(v_i,x)> b-i \qquad (i=1,2,\dots,a),
\]
or equivalently, $|x\cap v_i|< k-b+i$ \,$(i=1,2,\dots,a)$.
It follows that
\[
 k\leq\sum_{i=1}^a|x\cap v_i|<\sum_{i=1}^a(k-b+i)
=a(k-b)+\frac{a(a+1)}2=k,
\]
a contradiction.
\end{proof}

\begin{prop}
$\burn(\frac12H(n,2))\leq \lceil \frac n4\rceil+1$.
\end{prop}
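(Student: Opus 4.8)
The plan is to reduce the problem to covering a hypercube by two Hamming balls, in the spirit of Alon's argument for $Q_n$. I would first identify the vertex set $V=\{x\in\F_2^n:\sum_{i=1}^n x_i=0\}$ of $\frac12 H(n,2)$ with $\F_2^{n-1}$ by forgetting the last coordinate (which is forced by the parity condition). If two vertices have images $\xx,\yy\in\F_2^{n-1}$ with $t:=|\{i\le n-1:x_i\ne y_i\}|$, then their last coordinates disagree exactly when $t$ is odd, so $x$ and $y$ differ in exactly $2\lceil t/2\rceil$ of the $n$ coordinates, whence $\dist(x,y)=\lceil t/2\rceil$. Consequently, for every $v\in V$ and every integer $r\ge0$, the set $\Gamma_r(v)$ equals the Hamming ball $\{u\in\F_2^{n-1}:|\{i:u_i\ne\bar v_i\}|\le 2r\}$ of radius $2r$ about the image $\bar v$ of $v$.

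Granting this, the burning sequence is immediate. Set $k=\lceil n/4\rceil$, let $v_1,v_2\in V$ be the vertices whose images are $(0,\dots,0)$ and $(1,\dots,1)$, and fill out the sequence with arbitrary $v_3,\dots,v_{k+1}$. To see this is a burning sequence of length $k+1$, it suffices to show $\Gamma_k(v_1)\cup\Gamma_{k-1}(v_2)=V$. A string $u\in\F_2^{n-1}$ of Hamming weight $t$ is at distance $t$ from $(0,\dots,0)$ and at distance $n-1-t$ from $(1,\dots,1)$; if $t\le 2k$ then $u\in\Gamma_k(v_1)$, and if $t\ge 2k+1$ then $n-1-t\le n-2k-2\le 2(k-1)$, the last step being exactly the inequality $n\le 4k$, which holds because $k=\lceil n/4\rceil$. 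Hence the two balls cover $\F_2^{n-1}$, and so $\burn\bigl(\frac12 H(n,2)\bigr)\le k+1=\lceil n/4\rceil+1$.

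I do not expect a real obstacle here: once the identification is set up, the covering is a one-line weight count. The only thing needing a little care is the first paragraph — getting right the factor $2$ between halved-cube distance and Hamming distance in $Q_{n-1}$, and noting that $(0,\dots,0)$ and $(1,\dots,1)$ really are images of vertices (in this model every length-$(n-1)$ binary string is a vertex). One could instead argue directly inside $\frac12 H(n,2)$, taking $v_1$ the all-zero vertex and $v_2$ of maximal even weight, but then the odd-$n$ case carries a small amount of parity bookkeeping that the $Q_{n-1}$ reformulation eliminates.
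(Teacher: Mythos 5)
Your proof is correct, and it uses the same burning sequence as the paper (the all-zeros vertex and the unique vertex in $V$ agreeing with the all-ones string on the first $n-1$ coordinates). The main difference is presentational but genuinely useful: you identify $V$ with $\F_2^{n-1}$ by dropping the parity-forced last coordinate and observe that $\dist_{\frac12 H(n,2)}(x,y)=\lceil t/2\rceil$, where $t$ is the Hamming distance of the images in $\F_2^{n-1}$, so that $\Gamma_r(v)$ becomes a Hamming ball of radius $2r$ in $\F_2^{n-1}$. This turns the covering claim into a single weight count and, in particular, eliminates the even/odd-$n$ case split that the paper handles by separately analyzing the antipodal layer (which is a single vertex when $n$ is even, and a $K_n$-clique when $n$ is odd). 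The paper's version stays intrinsic to $\frac12 H(n,2)$ and exhibits the layer structure explicitly, which is perhaps more transparent geometrically; your reformulation is shorter and makes the arithmetic $n\le 4\lceil n/4\rceil$ do all the work at once. Both are valid; just make sure to state explicitly, as you do, that the projection $V\to\F_2^{n-1}$ is a bijection so that the chosen images really lift to vertices.
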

\begin{proof}
Let $v_1=(0,0,\dots,0)$, and let $v_2=(1,1,\dots,1)$ if $n$ is even, and
$v_2=(0,1,1,\dots,1)$ if $n$ is odd.
Let $b'=\lceil\frac n4\rceil$.
We show that if $v_1$ is burned at time $1$ and $v_2$ is burned at time $2$,
then all the vertices are burned by time $b'+1$, that is,
\begin{align}\label{eq:Gx1+Gx2=V}
\Gamma_{b'}(v_1)\cup\Gamma_{b'-1}(v_2)=V.
\end{align}
Let $d=\lfloor \frac n2\rfloor$ be the diameter of $\frac12H(n,2)$.
For $i=0,1,\dots, d$, let $V_i$ denote the set of vertices at distance $i$ from $v_1$.
Then, we have $V_0=\{v_1\}$, $V=V_0\cup V_1\cup\cdots\cup V_d$, and
$\Gamma_i(v_1)=V_0\cup V_1\cup\cdots\cup V_i$.

First, suppose that $n$ is even. Then, $V_d=\{v_2\}$,
$\Gamma_i(v_2)=V_d\cup V_{d-1}\cup\cdots\cup V_{d-i}$, and
\begin{align*}
\Gamma_{b'}(v_1)&=V_0\cup V_1\cup\cdots\cup V_{b'},\\
\Gamma_{b'-1}(v_2)&=V_{d}\cup V_{d-1}\cup\cdots\cup V_{d-b'+1}.
\end{align*}
Thus, \eqref{eq:Gx1+Gx2=V} holds if $(d-b'+1)-b'\leq 1$, that is,
$\lfloor\frac n2\rfloor\leq2\lceil\frac n4\rceil$,
which is certainly true.

Next, suppose that $n$ is odd. In this case, $V_d$ consists of the vectors
$(x_1,\ldots,x_n)\in\F_2^n$ such that $\big|\{i:x_i=0\}\big|=1$.
Then, $v_2\in V_d$, and
the subgraph of $\frac12H(n,2)$ induced on $V_d$ is isomorphic to the complete
graph $K_n$, from which it follows that $\Gamma_1(v_2)\supset V_d$, and
$\Gamma_i(v_2)\supset V_d\cup V_{d-1}\cup\cdots\cup V_{d-i+1}$. Then, we have
\begin{align*}
\Gamma_{b'}(v_1)&=V_0\cup V_1\cup\cdots\cup V_{b'},\\
\Gamma_{b'-1}(v_2)&\supset V_{d}\cup V_{d-1}\cup\cdots\cup V_{d-b'+2}.
\end{align*}
Then, \eqref{eq:Gx1+Gx2=V} holds if $(d-b'+2)-b'\leq 1$, that is,
$\lfloor\frac n2\rfloor\leq2\lceil\frac n4\rceil-1$,
which is certainly true.
\end{proof}

\section{Lower bounds}\label{sec:lower bounds}
Let $X_n$ be one of a Hamming graph $H(n,q)$, a Johnson graph $J(n,k)$, and
a halved $n$-cube $\frac12H(n,2)$.
Table~\ref{table1} below lists the corresponding
multiplicity $m$ of the second largest eigenvalue $\theta$, 
the diameter $d$, and the maximum diameter $D_i$ of the $1$-skeletons of the $i$-faces of the eigenpolytope. 
Let $b:=(1-\frac{\theta}{b_0})^{-1}$ so that the cosine sequence $w_0,w_1,\ldots,w_d$ is given by $w_i=1-\frac ib$ for $i=0,1,\dots,d$, see \eqref{eq:wi}.
Finally, we define an integer $b'$ as in the table.

\begin{table}[h]
\caption{}
\begin{center}
\begin{tabular}{c||c|c|c}
\label{table1}
& $H(n,q)$ & $J(n,k)$ & $\frac{1}{2}H(n,2)$ \rule[-3mm]{0mm}{5mm} \\
\hline
\hline
 $m$& $n(q-1)$& $n-1$& $n$ \rule[-2mm]{0mm}{7mm} \\
 $d$& $n$& $k$& $\lfloor\frac n2\rfloor$ \rule[-3mm]{0mm}{7mm} \\
$D_i$& $\min\{i,n\}$ & $\lfloor\frac{i+1}2\rfloor$ & $\max\big\{1-\delta_{i0},\lfloor\frac{i}{2}\rfloor\big\}$ \rule[-3mm]{0mm}{7mm} \\
 $b$& $(1-\frac1q)n$& $(1-\frac kn)k$& $\frac n4$ \rule[-3mm]{0mm}{7mm} \\
 $b'$& $\lfloor b\rfloor$& $\lceil b\rceil$& $\lceil b\rceil$ 
\end{tabular}
\end{center}
\end{table}

\begin{thm}
Let $X_n\in\big\{H(n,q),\,J(n,k),\,\frac12H(n,2)\big\}$. Then, we have $\burn(X_n)\geq b'+1$.
\end{thm}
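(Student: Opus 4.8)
The plan is to carry Alon's argument --- which rests on the Beck and Spencer lemma, itself built from a dynamic search on the cube --- into the geometry of the eigenpolytope $\PP_\theta$, where $\theta$ is the second largest eigenvalue. Two facts drive everything. First, since the $\theta$-eigenspace is orthogonal to the all-ones vector, the barycentre of $\PP_\theta$ is the origin, and there is a constant $\rho^2=\|u(\theta)\|^2$, independent of $u$, with $u(\theta)\cdot v(\theta)=\rho^2 w_{\dist(u,v)}$. Second, by \eqref{eq:wi} and Table~\ref{table1} the cosine sequence is the arithmetic progression $w_i=1-i/b$, so for each fixed vertex $v$ the map $x\mapsto\dist(v,x)$ is an affine function of $v(\theta)\cdot x(\theta)$; equivalently, the vertices of $\PP_\theta$ lie on the $d+1$ equally spaced hyperplanes perpendicular to $v(\theta)$.

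With this in hand, I would reduce the theorem to a geometric survival statement. Given any $v_1,\dots,v_{b'}\in V$, burning $v_i$ at time $i$ leaves a vertex $x$ unburned at time $b'$ exactly when $\dist(v_i,x)>b'-i$ for all $i$, which by the two facts above is equivalent to
\[
v_i(\theta)\cdot x(\theta)<\rho^2\Bigl(1-\tfrac{b'-i}{b}\Bigr)\qquad(i=1,\dots,b').
\]
Each right-hand side is positive, because $b'-i\le b'-1<b$ in all three families (immediate from Table~\ref{table1}, using $n\ge 2k$ in the Johnson case); this is precisely where the floor, resp.\ ceiling, in the definition of $b'$ supplies the slack the argument needs. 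Hence the barycentre of $\PP_\theta$ already lies in the intersection of these $b'$ open half-spaces, and the task is to reach an honest vertex of $\PP_\theta$ without leaving that intersection.

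I would do this by a dynamic search in the spirit of Alon and of Beck and Spencer. One reveals a vertex $x$ of $\PP_\theta$ stage by stage --- each stage fixing one simplex factor for $H(n,q)$, one membership bit for $J(n,k)$, or one cube coordinate for $\tfrac12H(n,2)$ --- which geometrically is a descent $\PP_\theta=f_0\supsetneq f_1\supsetneq\cdots\supsetneq f_N=\{x\}$ through a flag of faces. Along the way one maintains, for each $v_i$, a measure of how much farther $\dist(v_i,\cdot)$ on the current face must still grow to surpass $b'-i$, and combines these into a Beck--Spencer-type potential $\Phi$: schematically, a front-loaded weighted sum over $i$ of a function that is convex and increasing in that residual and depends also on the number of stages still to come. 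The crux is that at every stage some choice of the next local coordinate does not increase $\Phi$, because the average of $\Phi$ over a uniformly random such choice does not increase --- and this is exactly where the equally spaced hyperplanes are used, since they force the expected per-stage increment of each $\dist(v_i,\cdot)$ to equal the common slope $1-\theta/b_0=1/b$, matching the per-stage allowance built into $\Phi$. Following $\Phi$ from $f_0$, where it is small thanks to the slack above, down to the terminal vertex $x=f_N$ then forces all $b'$ inequalities at $x$. That a legal stage always remains --- i.e.\ that the current face still carries a subface strictly on the far side of the hyperplane for each $v_i$ --- uses $b'\le d$ together with the convex-subgraph/face dictionary and the diameter bounds $D_i$ recorded above.

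What is left is bookkeeping, carried out family by family: substitute $m,d,b,b',D_i$ from Table~\ref{table1}, verify the one-stage inequality, confirm that the flag actually terminates at a vertex, and settle the integrality cases ($n$ even versus odd for $\tfrac12H(n,2)$, and the interplay of $\lfloor\cdot\rfloor$ and $\lceil\cdot\rceil$ in $b'$). The halved-cube bound should come out equal to the upper bound of Section~\ref{sec:upper bounds}, whereas $H(n,q)$ with general $q$ is the tightest of the three and is where taking $b'=\lfloor b\rfloor$ rather than $\lceil b\rceil$ is essential. I expect the main obstacle to be the design of $\Phi$ and the proof of the one-stage non-increase inequality: a naive greedy face-peeling would lose dimension at a total rate quadratic in $b'$, far beyond the budget $m=\dim\PP_\theta$, so the amortized Beck--Spencer analysis --- powered by the arithmetic cosine sequence --- is unavoidable.
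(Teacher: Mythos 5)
Your geometric setup matches the paper exactly: the cosine sequence $w_i=1-i/b$ converts $\dist(v_i,\cdot)$ into an affine function of $v_i(\theta)\cdot(\cdot)$, the vertices of $\QQ_\theta$ lie on $d+1$ equally spaced hyperplanes normal to each $\vv_i$, the survival condition becomes $\alpha_i<(i-b'+b)/b$, and the right-hand sides are positive at the barycentre. The paper also performs a descent through a nested chain of faces $F_{m-1}\supset\cdots\supset F_1\ni\xx$, so the overall shape of your plan is right. But the engine you put inside that shell is different, and it is also the part you leave unbuilt.

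You propose to maintain a Beck--Spencer-type potential $\Phi$ and show by averaging over random local choices that some next coordinate keeps $\Phi$ from increasing; you neither construct $\Phi$ nor prove the one-stage non-increase, and you yourself flag this as ``the main obstacle.'' That is not bookkeeping --- it is the theorem. Two further warning signs: the slack in $\alpha_i<(i-b'+b)/b$ grows linearly in $i$, so the potential would have to be carefully front-loaded to deliver an $i$-dependent rather than a uniform bound; and for $J(n,k)$ and $\tfrac12H(n,2)$ the local coordinates are constrained (exactly $k$ ones; even parity), so ``fix one membership bit uniformly at random'' is not a legal stage without extra work. The paper avoids all of this with a deterministic, linear-algebraic descent and no potential at all. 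It pads $v_1,\dots,v_{b'}$ to $v_1,\dots,v_{m-1}$, starts at the origin, moves in the common kernel of $\{\vv_1,\dots,\vv_{m-1}\}$ until hitting a facet, then repeatedly drops the highest-indexed constraint and moves within the current face, so that $\xx_i\in F_i$ with $\vv_j\cdot\xx_i=0$ for all $j\le i$; the final $\xx$ is an endpoint of $F_1$ with $\vv_1\cdot\xx\le 0$. The key inequality is then a \emph{diameter} bound, not an amortized potential bound: once constraint $i$ is dropped, the trajectory stays in the $i$-face $F_i$, whose $1$-skeleton has diameter $\le D_i$, and since $\xx_i$ lies exactly on $H_0(\vv_i)$ while $F_i$ contains a vertex on the negative side whenever $\alpha_i>0$, the equally spaced hyperplanes give $\alpha_i<D_i/b$, which is then compared with $(i-b'+b)/b$ case by case via Table~\ref{table1}. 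Your worry that a ``naive greedy face-peeling loses dimension quadratically'' is precisely what this kernel descent is designed to kill --- it spends exactly one dimension per constraint --- so the Beck--Spencer amortization you were reaching for is replaced by $D_i$. As written, your proposal identifies the right arena but does not contain a proof.
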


\begin{proof}
Let $b'$ vertices $v_1,v_2,\ldots,v_{b'}$ of $X_n$ be given.
Suppose that $v_i$ is burned at time $i$ for each $i=1,2,\ldots,b'$. 
We need to find a vertex $x$ that remains unburned at time $b'$, that is,
\begin{align}\label{eq:d>b-i}
\dist(v_i,x)> b'-i
\end{align}
for every $i=1,2,\dots,b'$. 

It follows from $w_i=1-\frac ib$ that $i=b(1-w_i)$.
Let $\alpha_i$ be the cosine of the angle between the vectors $v_i(\theta)$ and
$x(\theta)$. Then, we have
\[
\dist(v_i,x)=b(1-\alpha_i).
\]
Hence, \eqref{eq:d>b-i} is equivalent to
\begin{align}\label{eq:alpha_i}
\alpha_i<\frac {i-b'+b}b 
\end{align}
for $i=1,2,\ldots,b'$. 
Note that the right hand side of \eqref{eq:alpha_i} is positive, and so
\eqref{eq:alpha_i} is true whenever $\alpha_i\leq 0$.

We also note that the vertices $v(\theta)$ $(v\in V)$ of $\PP_\theta$ have the same norm, and hence we can choose a positive constant $c$ so that all the vertices of the polytope $\QQ_\theta:=c\PP_\theta\subset\R^m$ are on the unit sphere centered at the origin.
(Specifically, we have $c=\sqrt{|V|/m}$.)
This defines an embedding $\psi:V\to\QQ_\theta$ by $\psi(v)=c\cdot v(\theta)$.

Choose additional vertices $v_{b'+1},v_{b'+2},\ldots,v_{m-1}$ arbitrarily, and let $\vv_i=\psi(v_i)$ for $i=1,2,\dots,m-1$.
We will find vectors
\begin{equation*}
    \xx_{m-1},\xx_{m-2},\ldots,\xx_1,\xx\in\R^m
\end{equation*}
in this order by the following procedure, 
where $\xx_{m-j}$ is on some $(m-j)$-face $F_{m-j}$ of $\QQ_\theta$.

\begin{description}
\item[Step $1$] Consider the system of equations
$\vv_i\cdot\zz=0$ $(i\in[m-1])$ with variables $\zz\in\R^{m}$. 
We have $m-1$ equations and $m$ variables, so the solution space contains a line 
passing through the origin.
Starting from the origin, we move along the line until we hit a facet of $\QQ_\theta$.
Let $F_{m-1}$ be the facet and let $\xx_{m-1}$ be the intersection point of the line
and the facet. (If there is more than one facet containing $\xx_{m-1}$, then 
choose one of the facets arbitrarily and let $F_{m-1}$ be the chosen facet.)
\end{description}
Next, we do the following for $j=2,3,\ldots,m-1$.
\begin{description}
\item[Step $j$] 
In the previous step $j-1$, we have found $\xx_{m-(j-1)}$ on $F_{m-(j-1)}$, 
which is an $(m-(j-1))$-face of $\QQ_\theta$. 

If $\xx_{m-(j-1)}$ is already on a facet of $F_{m-(j-1)}$, then 
let $\xx_{m-j}=\xx_{m-(j-1)}$ and let $F_{m-j}$ be the facet (or possibly one of 
the facets). Then, proceed to Step $j+1$.

Otherwise, solve the system of equations $\vv_i\cdot\zz=0$ $(i\in[m-j])$ with variables $\zz\in\R^{m}$.
The solution space contains a line in the affine hull of $F_{m-(j-1)}$ passing through $\xx_{m-(j-1)}$.
Starting from $\xx_{m-(j-1)}$, we move along the line until we hit a facet
of $F_{m-(j-1)}$. Let $F_{m-j}$ be the facet (or one of the facets), 
and let $\xx_{m-j}$ be the intersection point.
\end{description}
After finding $\xx_1$ and $F_1$, let $\xx=\xx_1$ if $\xx_1$ is already a vertex of $\QQ_\theta$,
otherwise let $\xx$ be one of the endpoints of the $1$-face 
$F_1$ so that $\vv_1\cdot\xx\leq \vv_1\cdot\xx'$, where $\xx'$ denotes the other endpoint.

Now, we focus on $\xx,\xx_1,\xx_2,\ldots,\xx_{b'}$ and $F_1,F_2,\ldots,F_{b'}$,
and will not use $\xx_j,F_j$ for $j>b'$.
Let $x\in V$ be the vertex of $X_n$ defined by $\psi(x)=\xx$. Then, $\alpha_i$
is the cosine of the angle between $\vv_i$ and $\xx$. We show that $x$ is the
desired vertex satisfying \eqref{eq:alpha_i} for $i=1,2,\ldots,b'$.

By construction, $\xx$ is a vertex of $\QQ_\theta$
and $\xx_i\in F_i$ $(i=1,2,\dots,b')$, where $F_i$ is an $i$-face with
$F_1\subset F_2\subset\cdots\subset F_{b'}$.
Note also that $\vv_j\cdot\xx_i=0$ for $j=1,2,\ldots,i$.
Write $H_0(\vv_i)$ for the hyperplane in $\R^m$ with normal vector $\vv_i$ passing through
the origin. Then, $\xx_i\in H_0(\vv_i)$. 
For $j=0,1,\dots,d$, write $H(\vv_i,w_j)$ for the hyperplane
parallel to $H_0(\vv_i)$ passing through $w_j\vv_i$.
Then, two consecutive hyperplanes $H(\vv_i,w_j)$ and $H(\vv_i,w_{j+1})$ are at distance $\frac{1}{b}$.
For fixed $i$, every vertex of $\QQ_\theta$ is on one of the $d+1$ hyperplanes
$H(\vv_i,w_j)$ ($j=0,1,\ldots,d$), and every edge ($1$-face) of $\QQ_\theta$
connects two vertices on the same hyperplane or two vertices from two consecutive hyperplanes.

By the choice of $\xx$, we see that $\alpha_1=\vv_1\cdot\xx\leq\vv_1\cdot\xx_1=0$. 
Thus, \eqref{eq:alpha_i} holds if $i=1$.
For the rest of the proof, let $2\leq i\leq b'$. We may assume that
$\alpha_i>0$, otherwise \eqref{eq:alpha_i} holds trivially.
Since $\xx\in F_i$ 
and $\alpha_1>0$, it follows that $F_i\not\subset H_0(\vv_i)$.
If $G:=F_i\cap H_0(\vv_i)$ is a face of $F_i$, then $\xx_i,\xx_{i-1},\dots,\xx_1,\xx\in G$ by construction, but then $\alpha_i=\vv_i\cdot\xx=0$, a contradiction.
Thus, $G$ is not a face of $F_i$,
and this means that the set $F_i\setminus H_0(\vv_i)$ is disconnected, from which it follows that there is a vertex $\yy$ of $F_i$
such that $\vv_i\cdot\yy<0$. 
\begin{figure}[h]
\centering
\begin{tikzpicture}[scale=.4]
\draw[line width=1] (5.3,4)--(4,4)--(0,0)--(10,0)--(14,4)--(7.2,4);
\draw[line width=1] (5.7,4)--(6.8,4);
\draw[line width=1] (4.8,9)--(4,9)--(0,5)--(10,5)--(14,9)--(7.2,9);
\draw[line width=1] (5.2,9)--(6.8,9);
\draw[line width=1] (6.8,14)--(4,14)--(0,10)--(10,10)--(14,14)--(7.2,14);
\draw[line width=1] (7,7)--(7,9.8);
\draw[line width=1, dotted] (7,10)--(7,12);
\draw[line width=1, dotted] (7,7)--(7,5);
\draw[line width=1] (7,2)--(7,4.8);
\draw[line width=1, dotted] (7,2)--(7,0);
\draw[line width=1] (7,-.2)--(7,-1);
\draw[line width=1, ->] (7,12)--(7,15);
\draw[line width=1, ->] (7,7)--(12,8);
\draw[line width=1] (7,7)--(4.2,9.8);
\draw[line width=1, dotted, ->] (4,10)--(3,11);
\draw[line width=1, dotted] (7,7)--(6,5);
\draw[line width=1, ->] (5.9,4.8)--(4,1);
\draw (0,2.5) node {$H_0(\bm{v}_i,\beta)$};
\draw (0,7.5) node {$H_0(\bm{v}_i)$};
\draw (0,12.6) node {$H_0(\bm{v}_i,\alpha_i)$};
\draw (8.1,12) node {$\alpha_i\bm{v}_i$};
\draw (7.9,2) node {$\beta\bm{v}_i$};
\draw (7.5,6.6) node {$\bm{0}$};
\draw (11.1,8.4) node {$\bm{x}_i$};
\draw (7.7,15) node {$\bm{v}_i$};
\draw (3.8,11) node {$\bm{x}$};
\draw (4.8,1) node {$\bm{y}$};
\end{tikzpicture}
\end{figure}
The $1$-skeleton of $F_i$ has diameter at most $D_i$
(as a graph), and hence we have $\dist(x,y)\leq D_i$, where $\psi(y)=\yy$. 
Write $\beta$ for the cosine of the angle between $\vv_i$ and $\yy$, that is, 
$\beta=\vv_i\cdot\yy<0$.
Since $\xx\in H(\vv_i,\alpha_i)$, 
$\yy\in H(\vv_i,\beta)$,
and these two hyperplanes are at distance $\alpha_i-\beta$, we have 
\[
 \alpha_i<\alpha_i-\beta\leq\frac{\dist(x,y)}b\leq\frac{D_i}b.
\]
Thus, to show \eqref{eq:alpha_i}, it suffices to show that $\frac{D_i}b\leq\frac{i-b'+b}b$.
If $X_n=H(n,q)$, then this is equivalent to $\lfloor b\rfloor\leq b$, which is certainly true.
If $X_n=J(n,k)$, then this is equivalent to
$\lceil b\rceil-b \leq i-\lfloor\frac{i+1}2\rfloor$.
The right-hand side is
increasing in $i$, and the inequality reads $\lceil b\rceil -b \leq 1$ for $i=2$, which is certainly true.
If $X_n=\frac12H(n,2)$, then this is equivalent to $\lceil b\rceil-b \leq i-\lfloor\frac{i}{2}\rfloor$ (for $i\geq 2$), which is similarly verified.
Consequently, the vertex $x\in V$ of $X_n$ with $x(\theta)=\xx$ satisfies \eqref{eq:alpha_i}, and the proof is complete.
\end{proof}

\section*{Acknowledgments}

HT was supported by JSPS KAKENHI Grant Number JP23K03064.
NT was supported by JSPS KAKENHI Grant Number JP23K03201.

\end{document}